\numberwithin{equation}{section}
\newcommand{\eps}{\varepsilon}
\newcommand{\RR}{\mathds{R}}
      \newtheorem{theorem}{Theorem}[section]
       \newtheorem{proposition}[theorem]{Proposition}
       \newtheorem{corollary}[theorem]{Corollary}
       \newtheorem{remark}{Remark}[section]
\def\E{{\mathbb E}}
\def\V{{\rm Var}}
\def\Var{{\rm Var}}
\date{Created:  October 27, 2009. \\ Printed: \today \ file: \jobname.tex}
\author{
W{\l}odek  Bryc
}
\address{
Department of Mathematical Sciences,
University of Cincinnati,
PO Box 210025,
Cincinnati, OH 45221--0025, USA}
\title[Quadratic Harnesses]{Quadratic harnesses from generalized beta integrals
}
\keywords{Quadratic conditional moments, generalized beta integrals, harnesses}
\subjclass[2000]{60J25}
\begin{document}

 \begin{abstract}
We use generalized beta integrals to construct examples of Markov processes with linear regressions, and quadratic second  conditional moments.
  \end{abstract}
 \maketitle

\section{Introduction}

\subsection{Quadratic harnesses}\label{S: QH}
In \cite{Bryc-Matysiak-Wesolowski-04} the authors  consider
 square-integrable  stochastic processes on $(0,\infty)$
such that
for all $t,s> 0$,
\begin{equation}\label{EQ: cov}
\E(X_t)=0,\: \E(X_tX_s)=\min\{t,s\},
\end{equation}
$\E({X_t}|{\mathcal{F}_{ s, u}})$ is a linear function of $X_s,X_u$, and
$\V [X_t|\mathcal{F}_{s,u }]$ is a quadratic  function of $X_s,X_u$.
Here, $\mathcal{F}_{ s, u}$ is the two-sided
$\sigma$-field generated by $\{X_r: r\in (0,s]\cup[u,\infty)\}$.
Then  for all $s<t<u$, \eqref{EQ: cov} implies that
\begin{equation}
\label{EQ: LR} \E({X_t}|{\mathcal{F}_{ s, u}})=\frac{u-t}{u-s}
X_s+\frac{t-s}{u-s} X_u,
\end{equation}
which is sometimes referred to as a harness condition, see \cite{Mansuy-Yor04}. While there are numerous examples of harnesses that include all integrable L\'evy processes (\cite[(2.8)]{Jacod-Protter-88}),
the  assumption of quadratic conditional variance is more restrictive, see
\cite{Wesolowski93}.
Under certain  assumptions,  \cite[Theorem 2.2]{Bryc-Matysiak-Wesolowski-04}
 asserts that there exist numerical constants
 $\eta,\theta\in\RR$ $\sigma,\tau>0$ and $ \gamma\in[-1, 1+2\sqrt{\sigma\tau}]$ such that
for all $s<t<u$,
\begin{multline}\label{EQ: q-Var}
\V [X_t|\mathcal{F}_{s,u }]
= \frac{(u-t)(t-s)}{u(1+\sigma s)+\tau-\gamma s}\left( 1+\eta \frac{uX_s-sX_u}{u-s} +\theta\frac{X_u-X_s}{u-s}\right. \\
 \left.
+ \sigma
\frac{(uX_s-sX_u)^2}{(u-s)^2}+\tau\frac{(X_u-X_s)^2}{(u-s)^2}
-(1-\gamma)\frac{(X_u-X_s)(uX_s-sX_u)}{(u-s)^2} \right).
\end{multline}
We will say that a square-integrable stochastic process $(X_t)_{t\in T}$ is a quadratic harness on $T$ with parameters $(\eta,\theta,\sigma,\tau,\gamma)$,
if it satisfies \eqref{EQ: cov}, \eqref{EQ: LR} and \eqref{EQ: q-Var} on an open interval $T\subset (0,\infty)$.

Our goal is to construct examples of Markov quadratic harnesses with $\gamma=1-2\sqrt{\sigma\tau}$. In \cite[Proposition 4.4]{Bryc-Matysiak-Wesolowski-04}, these were called "classical quadratic harnesses. The construction follows   \cite[Section 2]{Bryc-Wesolowski-08} who construct quadratic harnesses with $\gamma<1-2\sqrt{\sigma\tau}$ from the Askey-Wilson integral.
Here we use instead some of the generalized Beta integrals 
from \cite{Askey:1989}.

The paper is organized into sections  based on the number of parameters in the generalized beta integrals. In particular, in  Section \ref{Sec:2par} we   exhibit explicit transition probabilities for the bridges of the hyperbolic secant  process, and for completeness in Section \ref{Sect:SBI} we re-analyze the Dirichlet process.

\subsection{Conversion to the standard form}
In this section we recall a procedure that we use to transform Markov processes with linear regressions and quadratic conditional variances into the  quadratic harnesses.
The following is a specification  of \cite[Theorem 1.1]{Bryc-Wesolowski-09} that fits our needs.
\begin{proposition}\label{P-Mobius}
Suppose $(Y_t)$ is a (real-valued) Markov process on an open interval $T\subset\RR$ such that
\begin{enumerate}
\item $\E(Y_t)=\alpha+\beta t$ for some real $\alpha,\beta$.
\item For $s<t$ in $T$, ${\rm Cov}(Y_s,Y_t)=M^2(\psi+s)(\delta+\eps t)$, where    $M^2(\psi+t)(\delta+\eps t)>0$ on the entire interval $T$, and that $\delta-\eps\psi>0$.
\item For $s<t<u$,
\begin{equation}\label{computed-cond-var}
\V(Y_t|Y_s,Y_u)=F_{t,s,u}
\left(\chi_0+\eta_0\frac{uY_s-sY_u}{u-s}+\theta_0 \frac{Y_u-Y_s}{u-s}+\frac{(Y_u-Y_s)^2}{(u-s)^2}\right),
\end{equation}
where $F_{t,s,u}$ is non-random and $\chi_0,\theta_0,\eta_0\in\RR$  are such that $\chi:=\chi_0+\alpha
\eta_0+\beta\theta_0+\beta^2> 0$.
\end{enumerate}
Denote $\widetilde Y_t=Y_t-\E(Y_t)$. Then there are two affine functions
$\ell(t)=\frac{t \delta -\psi }{M (\delta - \epsilon
   \psi) }$ and $m(t)=
 \frac{1-t \epsilon }{M (\delta -\epsilon  \psi)
   }$ and an open interval $T'\subset(0,\infty)$ such that
$X_t:=m(t)\widetilde Y_{\ell(t)/m(t)}$ defines a process $(X_t)$ on $T'$  such that \eqref{EQ: cov} holds and  \eqref{EQ: q-Var} holds with parameters
\begin{eqnarray}
\eta&=& M \left(\delta  \eta _0+\epsilon  \left(2
   \beta +\theta _0\right)\right)/\chi \,,  \\
\theta&=&   M \left(2 \beta +\psi  \eta _0+\theta
   _0\right)/\chi\,, \\
\sigma&=& M^2\eps^2/\chi\,, \\
\tau&=&\ M^2/\chi\,,\\
\gamma&=& 1+2\eps \sqrt{\sigma\tau}\,.
\end{eqnarray}
\end{proposition}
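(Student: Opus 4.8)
The plan is to view $t\mapsto X_t$ as the composition of centering, the affine spatial scaling by $m(t)$, and the fractional-linear time change $r(t):=\ell(t)/m(t)=(\delta t-\psi)/(1-\eps t)$, and then to verify \eqref{EQ: cov}, \eqref{EQ: LR} and \eqref{EQ: q-Var} in turn. First I would record the identities that drive every subsequent computation. Since $r'(t)=(\delta-\eps\psi)/(1-\eps t)^2>0$, the map $r$ is an increasing bijection taking some $T'\subset(0,\infty)$ onto $T$, so that $s<t<u$ in $X$-time corresponds to $r(s)<r(t)<r(u)$ in $Y$-time; moreover a short calculation gives $\psi+r(s)=s(\delta-\eps\psi)/(1-\eps s)$, $\delta+\eps r(t)=(\delta-\eps\psi)/(1-\eps t)$ and $r(u)-r(s)=(u-s)(\delta-\eps\psi)/[(1-\eps u)(1-\eps s)]$.

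The mean $\E(X_t)=0$ is immediate. For \eqref{EQ: cov} I would write $\E(X_sX_t)=m(s)m(t)\,{\rm Cov}(\widetilde Y_{r(s)},\widetilde Y_{r(t)})=m(s)m(t)M^2(\psi+r(s))(\delta+\eps r(t))$ and observe that the factors $(1-\eps s)$, $(1-\eps t)$ and $(\delta-\eps\psi)$ cancel against those hidden in $m(s)m(t)$, leaving exactly $\min\{s,t\}$. The harness identity \eqref{EQ: LR} for $(X_t)$ is inherited from the linear regression of $(Y_t)$, affine spatial maps and monotone time changes preserving linearity of conditional expectations; then \eqref{EQ: cov} forces the displayed coefficients.

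The heart of the proof is \eqref{EQ: q-Var}. Using the Markov property together with the bijective time change, and the facts that $m(t)$ is non-random and centering does not change variances, I would reduce to $\V[X_t\mid\mathcal{F}_{s,u}]=m(t)^2\,\V[Y_{r(t)}\mid Y_{r(s)},Y_{r(u)}]$ and insert \eqref{computed-cond-var} at the times $r(s),r(t),r(u)$. Writing $A=(uX_s-sX_u)/(u-s)$, $B=(X_u-X_s)/(u-s)$ and substituting $Y_{r(s)}=X_s/m(s)+\alpha+\beta r(s)$ and its analogue at $u$, the identities above collapse the two canonical $Y$-combinations to the affine forms $\bigl(Y_{r(u)}-Y_{r(s)}\bigr)/\bigl(r(u)-r(s)\bigr)=M(\eps A+B)+\beta$ and $\bigl(r(u)Y_{r(s)}-r(s)Y_{r(u)}\bigr)/\bigl(r(u)-r(s)\bigr)=M(\delta A+\psi B)+\alpha$. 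Expanding \eqref{computed-cond-var} in the basis $\{1,A,B,A^2,B^2,AB\}$, the constant term is exactly $\chi_0+\alpha\eta_0+\beta\theta_0+\beta^2=\chi$; factoring $\chi$ out and matching the remaining coefficients against $1+\eta A+\theta B+\sigma A^2+\tau B^2-(1-\gamma)AB$ reproduces the claimed $\eta,\theta,\sigma=M^2\eps^2/\chi,\tau=M^2/\chi$, while the $AB$-coefficient yields $\gamma=1+2M^2\eps/\chi$, which together with the expressions for $\sigma$ and $\tau$ is the stated $\gamma=1+2\eps\sqrt{\sigma\tau}$.

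The step I expect to be the main obstacle is pinning down the non-random prefactor, because hypothesis (3) fixes $F_{t,s,u}$ only as ``non-random'', so direct substitution determines $\V[X_t\mid\mathcal{F}_{s,u}]$ only up to the unknown factor $G_{t,s,u}:=m(t)^2F_{r(t),r(s),r(u)}\chi$. I would avoid computing $F$ by a law-of-total-variance argument: from \eqref{EQ: cov} and \eqref{EQ: LR} one gets $\E\bigl[\V[X_t\mid\mathcal{F}_{s,u}]\bigr]=(u-t)(t-s)/(u-s)$, whereas taking expectations in the bracket, using $\E A=\E B=0$, $\E A^2=us/(u-s)$, $\E B^2=1/(u-s)$ and $\E AB=-s/(u-s)$, gives $\bigl(u(1+\sigma s)+\tau-\gamma s\bigr)/(u-s)$. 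Since $G_{t,s,u}$ is non-random, dividing the two identities forces $G_{t,s,u}=(u-t)(t-s)/\bigl(u(1+\sigma s)+\tau-\gamma s\bigr)$, which is precisely the prefactor in \eqref{EQ: q-Var}. A final check that $T'$ and its image $r(T')=T$ are compatible with the positivity constraints in hypothesis (2) completes the argument.
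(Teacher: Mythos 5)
Your argument is correct in substance, but it takes a genuinely different route from the paper: the paper proves this proposition in one line, by quoting \cite[Theorem 1.1]{Bryc-Wesolowski-09} with the substitution $\chi=\chi_0$, $\eta=\eta_0$, $\theta=\theta_0$, $\sigma=0$, $\tau=1$, $\rho=0$, $a=M$, $b=M\psi$, $c=M\eps$, $d=M\delta$, whereas you reprove the specialized statement from scratch. Your verification of \eqref{EQ: cov} and of the coefficient matching in \eqref{EQ: q-Var} checks out: the identities $\psi+r(s)=s(\delta-\eps\psi)/(1-\eps s)$, $\delta+\eps r(t)=(\delta-\eps\psi)/(1-\eps t)$, $r(u)-r(s)=(u-s)(\delta-\eps\psi)/[(1-\eps u)(1-\eps s)]$ are right, the two canonical $Y$-combinations do collapse to $M(\eps A+B)+\beta$ and $M(\delta A+\psi B)+\alpha$, the constant term is exactly $\chi$, and your law-of-total-variance device for identifying the non-random prefactor (using $\E A=\E B=0$, $\E A^2=us/(u-s)$, $\E B^2=1/(u-s)$, $\E(AB)=-s/(u-s)$, and the positivity of $u(1+\sigma s)+\tau-\gamma s$) is a clean, self-contained way to avoid computing $F_{t,s,u}$. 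What the citation buys is brevity and access to the more general theorem (which also covers a nonzero $\sigma,\tau,\rho$ in the hypothesis); what your proof buys is a transparent, self-contained derivation that in particular explains where $\gamma=1+2\eps\sqrt{\sigma\tau}$ comes from.

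Two small caveats. First, your prefactor step needs \eqref{EQ: LR}, which you say is ``inherited from the linear regression of $(Y_t)$''; but linearity of $\E(Y_t|Y_s,Y_u)$ is not among hypotheses (1)--(3) as literally stated and cannot be deduced from the mean and covariance alone. It is clearly an intended hypothesis of the quoted theorem and holds in every application in the paper, but you should state explicitly that you are assuming it. Second, the $AB$-coefficient gives $\gamma=1+2M^2\eps/\chi$, while $\sqrt{\sigma\tau}=M^2|\eps|/\chi$, so the identity $\gamma=1+2\eps\sqrt{\sigma\tau}$ as written requires $|\eps|\in\{0,1\}$; this is consistent with the paper's remark that only $\eps=0,\pm1$ are used, but is worth flagging rather than asserting as an unconditional equality.
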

\begin{proof} This is \cite[Theorem 1.1]{Bryc-Wesolowski-09} specialized to
$\chi = \chi _0$,  $\eta =
   \eta _0$, $\theta = \theta _0$, $\sigma
   = 0$, $\tau =1$, $\rho = 0$, $a= M$, $b=M \psi $, $c= M
   \epsilon $, $d= M \delta $.
\end{proof}
\begin{remark}
We will apply this only to $\eps=0,\pm 1$, and $\chi_0,\theta_0,\eta_0\in\{0,1\}$.
\end{remark}
\begin{remark}
For $\eps\leq 0$, we see that $\gamma\leq 1$ and
 $\eta\sqrt{\tau}+\theta \sqrt{\sigma}= M^2(\delta-\eps\psi)\eta_0/\chi^2$ has the same sign as $\eta_0$.
\end{remark}
\begin{remark} The time domain $T'$ is the image of $T$ under the M\"obius transformation $t\mapsto (t+\psi)/(\eps t+\delta)$.
\end{remark}

Two related transformations are sometimes useful to keep in mind, as they take care of some additional non-uniqueness in the final form of \eqref{EQ: q-Var}.
Firstly, if $(X_t)$ is a quadratic harness with parameters $(\eta,\theta,\sigma,\tau,\gamma)$ then $(aX_{t/a^2})$ is a quadratic harness with parameters $ (\eta/a,a\theta,\sigma/a^2,a^2\tau,\gamma)$. In particular, if $\sigma=0$ and $\tau>0$,  then without loss of generality we may take $\tau=1$. And if $\sigma,\tau>0$ then without loss of generality we may take $\sigma=\tau$. (So our constructions will lead to these two cases only.)

Secondly, time inversion $(tX_{1/t})$ converts  a quadratic harness with parameters $(\eta,\theta,\sigma,\tau,\gamma)$ into a quadratic harness with parameters $(\theta,\eta,\tau,\sigma,\gamma)$, i.e. it swaps the entries within pairs $(\eta,\theta)$ and $(\sigma,\tau)$. In particular, time inversion maps a quadratic harness with $\sigma=0$, $\tau=1$ into a quadratic harness with $\sigma=1$, $\tau=0$. Similarly, it maps a quadratic harness with parameters  $\sigma=\tau$ and $\eta^2<4\sigma$, $\theta^2\geq 4\sigma$ into a quadratic harness with parameters $\sigma=\tau$ and $\eta^2\geq 4\sigma$, $\theta<4\sigma$.

\section{Four-parameter beta integral}\label{Sec:4par}
This section contains the construction of Markov processes based on the four-parameter beta integral \cite[(8.i)]{Askey:1989}. After a transformation, these processes become quadratic harnesses with arbitrary $\sigma=\tau\in(0,1)$, $\gamma=1-2\sqrt{\sigma\tau}$, and with $\eta,\theta$ such that  $\sqrt{\tau}\eta+\sqrt{\sigma}\theta\ne 0$; parameters  $\eta,\theta$ will be required to satisfy also some additional restrictions, of which $\eta\theta\geq 0$ suffices for all constructions to go through.
 Since the main steps will be repeated several times, first with three parameters to cover the case $\sigma=0$, and then with two parameters to cover the case $\sqrt{\tau}\eta+\sqrt{\sigma}\theta=0$, we give here more details so that we can suppress them in the subsequent iterations.

The construction starts with four complex numbers  $a_1,a_2,a_3,a_4$  with strictly positive real parts.  The generalized beta integral    \cite{deBranges:1972,Wilson:1980} after changing the variable to $\sqrt{x}$ is
\begin{equation}\label{WI}
\int_0^\infty \frac{\prod_{j=1}^4 \left(\Gamma(a_j+i\sqrt{x})\Gamma(a_j-i\sqrt{x})\right)
}{\sqrt{x}\left|\Gamma(2i\sqrt{x})\right|^2} dx
=\frac{4 \pi  \prod_{1\leq k<j\leq 4}\Gamma (a_k+a_j)}{\Gamma (a_1+a_2+a_3+a_4)}.
\end{equation}
Denote
\begin{equation}\label{K}
K(a,b,c,d)=\frac{\Gamma (a+b+c+d)}{4 \pi  \Gamma (a+b) \Gamma (a+c) \Gamma (b+c) \Gamma (a+d) \Gamma (b+d) \Gamma (c+d)}.
\end{equation}
If $a,b,c,d$ are   positive real numbers, or come as one or two conjugate pairs with positive real parts,  identity  \eqref{WI} implies that the following function of $x>0$ becomes a four-parameter probability density function on $(0,\infty)$:
\begin{equation}\label{f}
f(x;a,b,c,d)=K(a,b,c,d)\frac{\left|\Gamma(a+i\sqrt{x})
\Gamma(b+i\sqrt{x})\Gamma(c+i\sqrt{x}) \Gamma(d+i\sqrt{x})\right|^2 }{\sqrt{x}\left|\Gamma(2 i\sqrt{x})\right|^2}.
\end{equation}

\begin{proposition}\label{P-abcd}
If a random variable $X$ has density $f(x;a,b,c,d)$ then
\begin{equation}\label{m-abcd}
\E(X)=\frac{abc +abd+acd+bcd}{a+b+c+d}
\end{equation}
and
\begin{equation}\label{Var-abcd}
\Var(X)=\frac{(a+b) (a+c) (b+c) (a+d) (b+d) (c+d)}{(a+b+c+d)^2 (a+b+c+d+1)}.
\end{equation}
\end{proposition}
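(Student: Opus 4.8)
The plan is to exploit a \emph{raising relation} between densities with neighbouring parameters. Using the functional equation $\Gamma(z+1)=z\Gamma(z)$ one has $|\Gamma(a+1+i\sqrt{x})|^2=(a+i\sqrt x)(a-i\sqrt x)|\Gamma(a+i\sqrt x)|^2=(a^2+x)|\Gamma(a+i\sqrt x)|^2$, so comparing \eqref{f} at parameters $(a+1,b,c,d)$ and $(a,b,c,d)$ yields
\[
f(x;a+1,b,c,d)=\frac{K(a+1,b,c,d)}{K(a,b,c,d)}\,(a^2+x)\,f(x;a,b,c,d).
\]
Both sides integrate to $1$ (the identity \eqref{WI} is valid for all complex parameters with positive real part, so normalization holds even when a single shift temporarily breaks a conjugate pairing), hence integrating over $(0,\infty)$ gives $a^2+\E(X)=K(a,b,c,d)/K(a+1,b,c,d)$. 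First I would evaluate this $K$-ratio from \eqref{K} by applying $\Gamma(z+1)=z\Gamma(z)$ factor by factor, obtaining $(a+b)(a+c)(a+d)/(a+b+c+d)$; subtracting $a^2$ and collecting over the common denominator collapses to $(abc+abd+acd+bcd)/(a+b+c+d)$, which is \eqref{m-abcd}.

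For the variance I would apply the raising relation twice, in two \emph{different} parameters. Multiplying the integrand by $(a^2+x)(b^2+x)$ effects both $a\mapsto a+1$ and $b\mapsto b+1$ (and conveniently preserves a conjugate pairing of $a,b$), giving
\[
\E\big[(a^2+X)(b^2+X)\big]=\frac{K(a,b,c,d)}{K(a+1,b+1,c,d)}=\frac{(a+b)(a+b+1)(a+c)(a+d)(b+c)(b+d)}{(a+b+c+d)(a+b+c+d+1)},
\]
again by a termwise use of $\Gamma(z+1)=z\Gamma(z)$ in \eqref{K}. Since $a^2$ and $b^2$ are constants, $\Var(X)=\E[(a^2+X)(b^2+X)]-(a^2+\E(X))(b^2+\E(X))$, and by the mean computation $a^2+\E(X)=(a+b)(a+c)(a+d)/(a+b+c+d)$ while, by symmetry, $b^2+\E(X)=(a+b)(b+c)(b+d)/(a+b+c+d)$. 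Factoring $(a+b)(a+c)(b+c)(a+d)(b+d)/(a+b+c+d)$ out of the difference leaves the bracket $\tfrac{a+b+1}{a+b+c+d+1}-\tfrac{a+b}{a+b+c+d}$, which over a common denominator reduces to $(c+d)/[(a+b+c+d)(a+b+c+d+1)]$; multiplying through produces exactly \eqref{Var-abcd}.

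The computational core is routine once the raising relation is available, so I expect the only delicate point to be the normalization of the shifted densities. When $a$ alone is shifted and $a$ belongs to a conjugate pair, $f(x;a+1,b,c,d)$ need not be real and nonnegative; the argument is rescued by reading \eqref{WI} as an identity of analytic functions of the four complex parameters throughout the region of positive real parts, so that $\int_0^\infty (a^2+x)f(x;a,b,c,d)\,dx=1/K(a+1,b,c,d)$ still holds and the (manifestly real) moment formulas follow on the admissible set by analytic continuation. The double shift used for the variance sidesteps this entirely, since $a,\bar a\mapsto a+1,\overline{a+1}$ remains within the admissible parameter set.
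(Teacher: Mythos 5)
Your proposal is correct and follows essentially the same route as the paper: the paper's proof rests on exactly the two identities $\E(a^2+X)=K(a,b,c,d)/K(a+1,b,c,d)$ and $\E\bigl((a^2+X)(b^2+X)\bigr)=K(a,b,c,d)/K(a+1,b+1,c,d)$, evaluated via $s\Gamma(s)=\Gamma(s+1)$, which are your single and double raising relations. Your extra remark on justifying the single-parameter shift when it breaks a conjugate pairing (by reading \eqref{WI} as an identity in the complex parameters) is a sound filling-in of a detail the paper leaves implicit.
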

\begin{proof}  The formulas can be read out from the first two orthogonal polynomials \cite[(1.1.4)]{Koekoek-Swarttouw}, but they also follow easily from the  formulas
$$\E(a^2+X)=\E\left((a+i\sqrt{X})(a-i\sqrt{X})\right)=\frac{K(a,b,c,d)}{K(a+1,b,c,d)}$$
and
$$
a^2b^2+(a^2+b^2)\E(X)+\E(X^2)=\E\left((a^2+X)(b^2+X)\right)=\frac{K(a,b,c,d)}{K(a+1,b+1,c,d)}.
$$
Now using \eqref{K} and $s\Gamma(s)=\Gamma(s+1)$, we get \eqref{m-abcd}, and  after a  calculation we get \eqref{Var-abcd}.
\end{proof}
Next, we prove a "convolution formula" which will be used to verify the Chapman-Kolmogorov equations.
\begin{proposition}\label{P-CK}
If $m>0$ then
\begin{multline}\label{CK1}
f(y;a,b,c+m,d+m)=\int_0^\infty f\left(y;a,b, m+i\sqrt{x},m-i\sqrt{x}\right)f(x;a+m,b+m,c,d)\,dx.
 \end{multline}
\end{proposition}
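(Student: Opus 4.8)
The plan is to substitute the explicit formulas \eqref{f} and \eqref{K} into the right-hand side and reduce the whole statement to a single application of the four-parameter integral \eqref{WI}. Throughout I write $u=\sqrt x$ and $v=\sqrt y$. The decisive preliminary observation is that a large cancellation occurs inside the product of the two densities: the normalizing constant $K(a,b,m+iu,m-iu)$ carries $1/\bigl(|\Gamma(a+m+iu)|^2|\Gamma(b+m+iu)|^2\bigr)$ in its denominator, while the density $f(x;a+m,b+m,c,d)$ carries exactly $|\Gamma(a+m+iu)\Gamma(b+m+iu)|^2$ in its numerator. Once these cancel, every remaining factor either depends on $v$ alone (and so passes outside the $x$-integral) or is one of four Gamma-factors forming a beta integrand in $x$.

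After this cancellation the integration over $x$ reduces to
\[
\int_0^\infty \Gamma\bigl(m+i(u+v)\bigr)\Gamma\bigl(m-i(u+v)\bigr)\Gamma\bigl(m+i(v-u)\bigr)\Gamma\bigl(m-i(v-u)\bigr)\,\frac{|\Gamma(c+iu)\Gamma(d+iu)|^2}{u\,|\Gamma(2iu)|^2}\,dx .
\]
The crucial step is to recognize this as \eqref{WI} with the four parameters $a_1=m+iv$, $a_2=m-iv$, $a_3=c$, $a_4=d$. Regrouping shows $\Gamma(a_1+iu)\Gamma(a_1-iu)=\Gamma\bigl(m+i(u+v)\bigr)\Gamma\bigl(m+i(v-u)\bigr)$ reproduces the first and third factors, while $\Gamma(a_2+iu)\Gamma(a_2-iu)$ reproduces the remaining two; the factors $\Gamma(c\pm iu)$ and $\Gamma(d\pm iu)$ give $a_3,a_4$. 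Since $m>0$ the pair $m\pm iv$ has positive real part, so the hypotheses of \eqref{WI} hold (one conjugate pair together with the admissible pair $c,d$), and the integral evaluates in closed form: the pair-sums are $a_1+a_2=2m$, $a_3+a_4=c+d$, and the four mixed sums assemble into $|\Gamma(c+m+iv)\Gamma(d+m+iv)|^2$, with denominator $\Gamma(2m+c+d)$.

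It then remains to collect the $v$-dependent factors and the constants. The surviving density factor is exactly
\[
\frac{|\Gamma(a+iv)\Gamma(b+iv)\Gamma(c+m+iv)\Gamma(d+m+iv)|^2}{v\,|\Gamma(2iv)|^2},
\]
which is precisely the density part of $f(y;a,b,c+m,d+m)$, so the two sides already agree as functions of $y$. What is left is a pure identity among Gamma constants, namely that the leftover product equals $K(a,b,c+m,d+m)$; after cancelling $4\pi$, $\Gamma(2m)$, $\Gamma(a+b+2m)$ and $\Gamma(c+d)$ this collapses directly to the defining formula \eqref{K}.

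I expect the main obstacle to be bookkeeping rather than genuine difficulty: one must carefully track which Gamma factors carry $i\sqrt x$ versus $i\sqrt y$ and, in particular, make the correct pairing of the four factors $\Gamma\bigl(m\pm i(u\pm v)\bigr)$ into the conjugate parameters $m\pm iv$ so that \eqref{WI} applies with $v$-dependent arguments. The only analytic point to keep in mind is that $f(y;a,b,m+i\sqrt x,m-i\sqrt x)$ is a genuine density for each $x>0$, since its third and fourth parameters form a conjugate pair with real part $m>0$; this is what legitimizes both the inner integral and the appeal to \eqref{WI}.
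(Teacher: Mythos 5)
Your proof is correct and is essentially the paper's own argument in expanded form: the paper packages the very same cancellation as the ratio identity \eqref{f/q}, whose right-hand side is the density $f\left(x;m+i\sqrt{y},m-i\sqrt{y},c,d\right)$ integrating to $1$ --- which is precisely your application of \eqref{WI} with $a_1=m+i\sqrt{y}$, $a_2=m-i\sqrt{y}$, $a_3=c$, $a_4=d$, combined with your final check of the Gamma constants. No gap; nothing further is needed.
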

\begin{proof}
Re-arranging the factors in \eqref{f}, 
we have
\begin{multline}\label{f/q}
\frac{f(x,a+m,b+m,c,d) f\left(y,a,b,m+i \sqrt{x},m-i \sqrt{x}\right)}{f(y,a,b,c+m,d+m)}=f\left(x,m+i
   \sqrt{y},m-i \sqrt{y},c,d\right).
\end{multline}
Formula \eqref{CK1} now follows, as $\int_0^\infty f\left(x; \mu+i\sqrt{y},\mu-i\sqrt{y},c,d\right)dx=1$.
\end{proof}
We remark that \eqref{f/q} is an analog of \cite[(b3)]{Jamison:1974} and will serve similar purposes. Related formulas  will appear again as \eqref{f/q3}, \eqref{f/q2}, \eqref{BBBridge}, and \eqref{f/q5}.

\subsection{The  auxiliary Markov process}\label{Sec:MarkovABCD}
We now define a family of Markov processes $(Y_t)_{t\in T}$, parameterized by $A,B,C,D$ that are either all real and positive or come as one or two complex conjugate pairs $A=\bar B$ or $C=\bar D$, with positive real parts.
Without loss of generality we may assume that $\Re( A) \leq \Re (B) $ and $\Re( C)\leq \Re (D)$.

%

As the time   domain for Markov process $(Y_t)$ we take the open interval $T=(-\Re (C), \Re( A))$,
 and as the state space we take $(0,\infty)$.
 We define the univariate distribution of $Y_t$  by the density
\begin{equation}\label{f-M}
f_t(x)=f(x;A-t, B-t,C+t,D+t).
\end{equation}
For $s<t$, we define the transition probability $\mathcal{L}(Y_t|Y_s=x)$ by the density
\begin{equation}\label{Tr-M}
f_{s,t}(y|x)=f\left(y;A-t, B-t, t-s+i\sqrt{x},t-s-i\sqrt{x}\right).
\end{equation}
It remains to verify that the above definitions are consistent.
\begin{proposition}\label{P-Y}
Formulas \eqref{f-M} and \eqref{Tr-M} determine a Markov process $(Y_t)_{t\in T}$.
Furthermore, $\E(Y_0)={(ABC+ABD+ACD+B C D)}/{(A+B+C+D)}$ by \eqref{m-abcd} and
\begin{equation}\label{m-M}
\E(Y_t)=\E(Y_0)+2\frac{A B-C D}{A+B+C+D} t-t^2.
\end{equation}
For $s\leq t$ in $T$,
\begin{equation}\label{cov}
\mbox{\rm Cov}(Y_s,Y_t)=M^2 (C+D+2 s) (A+B-2 t),
\end{equation}
where
\begin{equation}\label{SS}
M^2=\frac{(A+C) (B+C) (A+D) (B+D) }{(A+B+C+D)^2 (A+B+C+D+1)}>0.
\end{equation}

In view of \eqref{m-M}, the conditional moments simplify when we express them in terms of \begin{equation}\label{tilde y}
\widetilde Y_t=Y_{t/2}+t^2/4, \; -2\Re( C)<t<2\Re (A),
\end{equation}
with linear mean $\E(\widetilde Y_t)=\alpha+\beta t$ and the covariance $\mbox{\rm Cov}(\widetilde Y_s,\widetilde Y_t)=M^2 (C+D+ s) (A+B- t)$ for $s\leq t$. The one-sided conditional moments $s\leq t$ are:
\begin{equation}\label{m-M-c}
\E(\widetilde Y_t|\widetilde Y_s)=\frac{(A+B- t)}{A+B- s}\widetilde Y_s+\frac{A B (t-s)}{A+B- s}\,,
\end{equation}
\begin{equation}\label{m-V-c}
\Var(\widetilde Y_t|\widetilde Y_s)=
\frac{ (A+B- t) (t-s) \left(A^2-s A+\widetilde Y_s\right) \left(B^2- s B+\widetilde Y_s\right)}{(A+B- s)^2 (A+B- s+1)}\,.
\end{equation}
For $s<t<u$ in $T$,
\begin{equation}\label{m-M-cc}
\E(\widetilde Y_t|\widetilde Y_s,\widetilde Y_u)= \frac{(u-t)  \widetilde Y_s+(t-s) \widetilde Y_u}{u-s}\,,
\end{equation}
\begin{equation}\label{m-V-cc}
\Var(\widetilde Y_t|\widetilde Y_s,\widetilde Y_u)=\frac{(u-t)(t-s)}{u-s+1}\left( \frac{(\widetilde Y_u-\widetilde Y_s)^2}{(u-s)^2}+ \frac{u\widetilde Y_s-s\widetilde Y_u}{u-s}\right).
\end{equation}
\end{proposition}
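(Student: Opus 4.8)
The plan is to build everything on the two moment formulas of Proposition \ref{P-abcd}, together with the convolution identity \eqref{CK1} and its pointwise refinement \eqref{f/q}; after the $\widetilde Y$-substitution the stated formulas reduce to algebraic identities. First I would settle consistency and the Markov property. One checks that the transition kernels \eqref{Tr-M} are admissible densities (their parameters $A-t,B-t$ and the conjugate pair $(t-s)\pm i\sqrt{x}$ have positive real parts throughout $T=(-\Re C,\Re A)$, giving at most two conjugate pairs, so \eqref{f} applies), that pushing $f_s$ through $f_{s,t}$ reproduces $f_t$, and that the Chapman--Kolmogorov equations hold for $s<t<u$. Both of the latter are instances of \eqref{CK1}: taking $a=A-t$, $b=B-t$, $m=t-s$, $c=C+s$, $d=D+s$ turns \eqref{CK1} into $\int f_{s,t}(y|x)f_s(x)\,dx=f_t(y)$, while taking $a=A-u$, $b=B-u$, $m=u-t$, $c=(t-s)+i\sqrt{x}$, $d=(t-s)-i\sqrt{x}$ turns it into $\int f_{t,u}(y|\xi)f_{s,t}(\xi|x)\,d\xi=f_{s,u}(y|x)$, so no new work is needed here.

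Next come the one-variable moments. Applying \eqref{m-abcd} to $f_t=f(\cdot;A-t,B-t,C+t,D+t)$ gives $\E(Y_t)$ after the denominator collapses to $A+B+C+D$ and the numerator is expanded, yielding the quadratic \eqref{m-M}; the same formula at $t=0$ gives $\E(Y_0)$. For the covariance I would compute $\E(Y_t|Y_s=x)$ by feeding the transition parameters $a=A-t$, $b=B-t$, $c=(t-s)+i\sqrt{x}$, $d=(t-s)-i\sqrt{x}$ into \eqref{m-abcd}; since $cd=(t-s)^2+x$ and $c+d=2(t-s)$, this is affine in $x$ with slope $(A+B-2t)/(A+B-2s)$, whence $\mathrm{Cov}(Y_s,Y_t)=\frac{A+B-2t}{A+B-2s}\V(Y_s)$. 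Plugging $f_s$ into \eqref{Var-abcd} and noting that the four cross sums $A+C,B+C,A+D,B+D$ are $s$-free gives $\V(Y_s)=M^2(A+B-2s)(C+D+2s)$ with $M^2$ as in \eqref{SS}, and \eqref{cov} follows.

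Passing to $\widetilde Y_t=Y_{t/2}+t^2/4$ then linearizes the mean (the $-t^2/4$ from \eqref{m-M} cancels the added $t^2/4$), rescales the covariance to $M^2(C+D+s)(A+B-t)$, and, via the substitution $x=\widetilde Y_s-s^2/4$ in the one-sided conditional formulas above together with \eqref{Var-abcd}, produces \eqref{m-M-c} and \eqref{m-V-c} once the shifts $(A-s/2)^2+x=A^2-As+\widetilde Y_s$ and $(B-s/2)^2+x=B^2-Bs+\widetilde Y_s$ are carried out.

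The two-sided statements \eqref{m-M-cc}--\eqref{m-V-cc} are the crux, and I expect the main obstacle to be identifying the bridge density. By the Markov property the law of $Y_t$ given $Y_s=x$, $Y_u=z$ has density $f_{s,t}(w|x)f_{t,u}(z|w)/f_{s,u}(z|x)$, and I would recognize this as an instance of \eqref{f/q}: with $a=A-u$, $b=B-u$, $m=u-t$, $c=(t-s)+i\sqrt{x}$, $d=(t-s)-i\sqrt{x}$, the left-hand side of \eqref{f/q} is exactly this ratio and its right-hand side equals $f\!\left(w;(u-t)+i\sqrt{z},(u-t)-i\sqrt{z},(t-s)+i\sqrt{x},(t-s)-i\sqrt{x}\right)$, a genuine density built from two conjugate pairs with positive real parts. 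Applying \eqref{m-abcd} and \eqref{Var-abcd} to this density is then mechanical: the four parameters sum to $2(u-s)$, the two inner conjugate products are $(u-t)^2+z$ and $(t-s)^2+x$, and the product of the four mixed pairwise sums collapses to $\bigl[(u-s)^2+x+z\bigr]^2-4xz$. Finally the substitution $x=\widetilde Y_s-s^2/4$, $z=\widetilde Y_u-u^2/4$ turns the conditional mean into $\frac{(u-t)\widetilde Y_s+(t-s)\widetilde Y_u}{u-s}$ (the deterministic remainder cancels identically, as one checks by clearing denominators) and turns $\bigl[(u-s)^2+x+z\bigr]^2-4xz$ into $(\widetilde Y_u-\widetilde Y_s)^2+(u-s)(u\widetilde Y_s-s\widetilde Y_u)$, giving exactly \eqref{m-V-cc}.
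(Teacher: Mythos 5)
Your proposal follows the paper's proof essentially verbatim: Chapman--Kolmogorov via two applications of \eqref{CK1} (your parameter choice $c=C+s$, $d=D+s$ in the first application is the correct one), the two-sided conditional density via \eqref{f/q}, which is exactly \eqref{f-cond}, and all means, variances and conditional moments read off from Proposition \ref{P-abcd}. The only thing to repair is a factor-of-two bookkeeping slip in your last paragraph: once $s,t,u$ are taken as $\widetilde Y$-times --- which is what the substitution $x=\widetilde Y_s-s^2/4$ presupposes, since $\widetilde Y_s=Y_{s/2}+s^2/4$ means $x=Y_{s/2}$ --- the bridge parameters are $\tfrac{u-t}{2}\pm i\sqrt{z}$ and $\tfrac{t-s}{2}\pm i\sqrt{x}$, so they sum to $u-s$ rather than $2(u-s)$ (this is precisely what yields the $u-s+1$ in \eqref{m-V-cc}), and the product of the four mixed pairwise sums is $\bigl[\bigl(\tfrac{u-s}{2}\bigr)^2+x+z\bigr]^2-4xz$, which does collapse to $(\widetilde Y_u-\widetilde Y_s)^2+(u-s)(u\widetilde Y_s-s\widetilde Y_u)$ as you claim, whereas the expression $\bigl[(u-s)^2+x+z\bigr]^2-4xz$ as literally written does not.
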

\begin{proof}
To verify Chapman-Kolmogorov equations
we first use  \eqref{CK1} with $m=t-s$, $a=A-t$, $b=B-t$, $c=C+t$, $d=D+t$.
This gives
\begin{equation}\label{Chapman1}
f_t(y)=\int_0^\infty f_{s,t}(y|x)f_s(x)\,dx.
\end{equation}

 Next we use   \eqref{CK1} with  $m=u-t$, $a=A-u$, $b=B-u$, $c=t-s+i\sqrt{x}$, $d=t-s-i\sqrt{x}$ to verify the Chapman-Kolmogorov equations  for the transition probabilities,
 \begin{equation}\label{Chapman2}
 f_{s,u}(z|x)=\int_0^\infty f_{s,t}(y|x)f_{t,u}(z|y)\,dy
 .
\end{equation}

Formula \eqref{f/q} can be now reinterpreted as the formula for the conditional distribution $\mathcal{L}(Y_t|Y_s=x,Y_u=z)$, given by
the density
\begin{multline}\label{f-cond}
g(y|x,z)=\frac{f_{t,u}(z|y)f_{s,t}(y|x)}{f_{s,u}(z|x)}
=f\left(y; u-t+i\sqrt{z},u-t-i\sqrt{z},t-s+i\sqrt{x},t-s-i\sqrt{x}\right).
\end{multline}
Since this is again expressed in terms of the same density \eqref{f}, the formulas for the conditional mean and conditional variance are recalculated from Proposition \ref{P-abcd}.
Finally, we use \eqref{m-M}, and
\begin{equation}\label{m-V}
\Var(Y_t)=M^2{(A+B-2 t) (C+D+2 t)},
\end{equation}
which is calculated from \eqref{Var-abcd},
and \eqref{m-M-c}, to compute $\E(Y_sY_t)$ and we get \eqref{cov}.
\end{proof}
\begin{corollary}\label{C-beta4}
$(\widetilde Y_t)$ can be transformed into a quadratic harness with
covariance \eqref{EQ: cov} and the conditional variance \eqref{EQ: q-Var} with parameters
\begin{eqnarray}
\eta+\theta&=& \frac{(A+B+C+D)^2}{\sqrt{(A+C) (B+C) (A+D) (B+D) (A+B+C+D+1)}}\,,\\
\theta-\eta&=&\frac{(C-D)^2-(A-B)^2}{\sqrt{(A+C) (B+C) (A+D) (B+D) (A+B+C+D+1)}}\,,\\
\sigma=\tau&=&\frac{1}{A+B+C+D+1}\,,
\end{eqnarray}
and $\gamma=1-2\sqrt{\sigma\tau}=(A+B+C+D-1)/(A+B+C+D+1)$.
\end{corollary}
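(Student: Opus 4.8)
The plan is to apply Proposition~\ref{P-Mobius} to the process $(\widetilde Y_t)$, all of whose relevant moments are supplied by Proposition~\ref{P-Y}. First I would read off the constants demanded by Proposition~\ref{P-Mobius} by matching moment formulas. The linear mean gives $\alpha=\E(Y_0)$ and $\beta=(AB-CD)/(A+B+C+D)$; the covariance $\mathrm{Cov}(\widetilde Y_s,\widetilde Y_t)=M^2(C+D+s)(A+B-t)$ gives $\psi=C+D$, $\delta=A+B$, $\eps=-1$, with $M$ as in \eqref{SS}. Then $\delta-\eps\psi=A+B+C+D>0$ as required, and the positivity of $M^2(\psi+t)(\delta+\eps t)$ is just the positivity of $\Var(\widetilde Y_t)$. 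Comparing the quadratic conditional variance \eqref{m-V-cc} with \eqref{computed-cond-var} yields $\chi_0=0$, $\eta_0=1$, $\theta_0=0$, and $F_{t,s,u}=(u-t)(t-s)/(u-s+1)$.

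The one genuine verification is the positivity of $\chi:=\chi_0+\alpha\eta_0+\beta\theta_0+\beta^2=\alpha+\beta^2$. I would prove the identity
\[
\alpha+\beta^2=\frac{(A+C)(B+C)(A+D)(B+D)}{(A+B+C+D)^2}=M^2(A+B+C+D+1),
\]
which is manifestly positive under the stated sign conditions on $A,B,C,D$. Writing $p=A+B$, $q=C+D$, $r=AB$, $s=CD$ and clearing the denominator $(A+B+C+D)^2$, both sides expand to $r^2+rpq+rq^2+sp^2+spq-2rs+s^2$, so this is a short symmetric-polynomial calculation. This is the main obstacle, in that it is the only place where the specific structure of the four factors $A,B,C,D$ enters.

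With $\chi>0$ in hand, Proposition~\ref{P-Mobius} applies and produces the harness $(X_t)$ on some $T'\subset(0,\infty)$ together with the five parameters, which it then remains to simplify. Substituting $\eps=-1$, $\eta_0=1$, $\theta_0=0$ gives $\eta=M(A+B-2\beta)/\chi$ and $\theta=M(C+D+2\beta)/\chi$. The sum is immediate, $\eta+\theta=M(A+B+C+D)/\chi$; in the difference $\theta-\eta=M(C+D-A-B+4\beta)/\chi$ one substitutes $\beta=(AB-CD)/(A+B+C+D)$ and simplifies the numerator to $M\bigl((C-D)^2-(A-B)^2\bigr)/(A+B+C+D)$ via $(C+D)^2-4CD=(C-D)^2$ and $(A+B)^2-4AB=(A-B)^2$. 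Inserting $\chi=(A+C)(B+C)(A+D)(B+D)/(A+B+C+D)^2$ and $M$ from \eqref{SS} reduces both to the asserted closed forms. Finally $\sigma=\tau=M^2/\chi=1/(A+B+C+D+1)$ directly, and $\gamma=1+2\eps\sqrt{\sigma\tau}=1-2\sqrt{\sigma\tau}=(A+B+C+D-1)/(A+B+C+D+1)$, completing the identification.
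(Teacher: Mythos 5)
Your proposal is correct and follows exactly the route the paper takes: apply Proposition~\ref{P-Mobius} with $\alpha=\E(Y_0)$, $\beta=(AB-CD)/(A+B+C+D)$, $\eps=-1$, $\psi=C+D$, $\delta=A+B$, and $\eta_0=1$ as the only nonzero parameter in \eqref{computed-cond-var}. You simply carry out in full the algebra (in particular the identity $\chi=\alpha+\beta^2=(A+C)(B+C)(A+D)(B+D)/(A+B+C+D)^2=M^2(A+B+C+D+1)$) that the paper leaves implicit, and your computations check out.
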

\begin{proof}
We apply Proposition \ref{P-Mobius} with parameters
$$\alpha = \frac{A B C+A D C+B D C+A B D}{A+B+C+D},\;  \beta =
   \frac{AB-CD}{A+B+C+D}, \;\epsilon = -1, \; \psi =   C+D, \;\delta = A+B.$$ The only non-zero parameters in \eqref{computed-cond-var} are   $\eta _0= \tau_0=1$.
\end{proof}
\begin{remark}\label{Rem:domain}
The quadratic harness is defined on the interval
$$
T'=\left(\frac{C+D-2 \Re(C)}{A+B+2 \Re(C)}, \frac{C+D+2 \Re(A)}{A+B-2 \Re(A)}\right)\,.
$$
In particular, $T'=(0,\infty)$ if $A=\bar B$ and $C=\bar D$.  It is plausible that by allowing transition probabilities and univariate laws with discrete components, this interval could be extended to $(0,\infty)$ in all cases when $\Re(A+B)>0$ and  $\Re(C+D)>0$.
\end{remark}

 \subsection{The admissible range of $\eta,\theta$}
 In this section we study which collections of parameters correspond to quadratic harnesses from the previous construction.
Given $\sigma=\tau,\gamma=1-2\sigma$  and $\eta$, $\theta$ such that $\eta+\theta\ne0$, without loss of generality we may assume that $\eta+\theta>0$. For if we can find a quadratic  harness $(X_t)$ for one such set of parameters, then $(-X_t)$ is a quadratic harness on the same time domain, with the same $\sigma,\tau,\gamma$, but with $-\eta,-\theta$ instead of $\eta,\theta$.

Once we restrict ourselves to the case $\eta+\theta>0$,  we want to know for which $\eta,\theta,\sigma=\tau,\gamma=1-2\sigma$  we can find  $A,B,C,D$  that satisfy  the equations
from Corollary \ref{C-beta4} and satisfy the constraints for the construction of the Markov process $(Y_t)$.
We will see that we can always find such  $A,B,C,D$ if either $\eta\theta\geq 0$ (which under the assumption $\eta+\theta>0$, is equivalent to $\eta\geq 0$, $\theta >0$ or vice versa) or
 when the sign of $\eta\theta$ is arbitrary but $\eta^2<4\sigma$ and $\theta^2<4\tau$.

To proceed, we rewrite the equations from Corollary \ref{C-beta4} in equivalent form:
\begin{eqnarray}
A+B+C+D&=&(1-\sigma)/\sigma \label{A+B+C+D}\,,\\
(A+C) (B+C) (A+D) (B+D)&=&\frac{(1-\sigma)^4}{(\eta +\theta )^2 \sigma ^3}\,,\\
(C-D)^2-(A-B)^2&=&\frac{(\theta-\eta ) (1-\sigma )^2}{(\eta +\theta ) \sigma ^2}\,.\label{ABCD3}
\end{eqnarray}

 \subsubsection{Hyperbolic case} We first show that quadratic harnesses exist  for any $\eta,\theta$ such that $\eta+\theta> 0$, $\eta^2<4\sigma$ and $\theta^2<4\sigma$.  This is because in this case the system of equations (\ref{A+B+C+D}-\ref{ABCD3}) is solved by two conjugate pairs $A=\bar B$ and $C=\bar D$ with $A,C$ given by
$$A=\Re(A)+\frac{i (1-\sigma ) \sqrt{4 \sigma -\eta ^2}}{2 (\eta +\theta ) \sigma },\;
 C=\frac{1-\sigma}{2\sigma}- \Re(A)+\frac{i (1-\sigma ) \sqrt{4 \sigma -\theta ^2}}{2(\eta +\theta ) \sigma
   }$$ with arbitrary $0<\Re(A)<\frac{1-\sigma}{2\sigma}$.

 The apparent non-uniqueness in this solution and in others is in fact illusory, as it corresponds to the translation of the time domain $T$. This translation does not affect neither the transition probabilities of the final quadratic harness, nor the final time domain, which by Remark \ref{Rem:domain} is $T'=(0,\infty)$.

\subsubsection{} Next we go over the remaining choices of pairs $(\eta,\theta)$, and confirm that in each case we can always find a quadratic harness when $\eta\theta\geq0$.

We first consider $\eta,\theta$ such that $\eta+\theta> 0$, $\eta^2<4\sigma$ and $\theta^2\ge4\sigma $.  Then quadratic harnesses exist if $4\sigma+\eta^2+2\eta\theta>0$.
Indeed,  in this case the system of equations (\ref{A+B+C+D}-\ref{ABCD3}) is solved  with one conjugate pair $A=\bar B$. The solutions are
$$
A=\Re(A)+\frac{i (1-\sigma) \sqrt{4 \sigma -\eta ^2}}{2 (\eta +\theta ) \sigma }\,,
$$
$$C=\frac{\left(\eta +\theta -\sqrt{\theta ^2-4 \sigma }\right) (1-\sigma )}{2 (\eta +\theta ) \sigma
   }-\Re(A)\,,
$$
   $$
   D=\frac{\left(\eta +\theta +\sqrt{\theta ^2-4 \sigma }\right) (1-\sigma )}{2 (\eta +\theta ) \sigma
   }-\Re(A)\,.
   $$
   The restriction $4\sigma+\eta^2+2\eta\theta>0$ guarantees that  $\theta^2-4\sigma<(\theta+\eta)^2$ so one
 can find $\Re(A)>0$ such that $C>0$; then $D>0$ follows automatically.

The restriction $4\sigma+\eta^2+2\eta\theta>0$  holds, in particular,    if $\eta\ge0$, as then $4\sigma+\eta^2+2\eta\theta>2\eta^2+2\eta\theta=2\eta(\eta+\theta)\geq0$.

We remark that the left endpoint of the time domain $T'$ here is $0$, see Remark \ref{Rem:domain}.  This is of interest, since for such domains the one-sided conditional moments \eqref{m-M-c} and \eqref{m-V-c} imply uniqueness of the quadratic harness.

 Finally, if $\eta+\theta>0$ are such that $\eta^2\geq 4\sigma$ and
 $\theta^2\geq 4\sigma$, then under the condition
 $\eta+\theta>\sqrt{\eta^2-4\sigma}+\sqrt{\theta^2-4\sigma}$  one
 can choose a small enough $A>0$ such that
  $$B=A+\frac{\sqrt{\eta ^2-4 \sigma } (1-\sigma)}{(\eta +\theta )
 \sigma }\,,$$
  $$C=\frac{\left(\eta +\theta -\sqrt{\eta ^2-4 \sigma }-\sqrt{\theta ^2-4 \sigma }\right) (1-\sigma)}{2
   (\eta +\theta ) \sigma }-A\,,$$
  $$D=\frac{\left(\eta +\theta -\sqrt{\eta ^2-4 \sigma }+\sqrt{\theta ^2-4 \sigma }\right) (1-\sigma)}{2
   (\eta +\theta ) \sigma }-A\,,$$
are all positive.
  In particular, if $\eta,\theta>0$ then $\eta>\sqrt{\eta^2-4\sigma}$ and $\theta>\sqrt{\theta^2-4\sigma}$ so the above solution will indeed give us a quadratic harness on a finite interval $T'$.

\section{Three parameter beta integral}\label{Sec:3par}
  For $a>0$ and $b,c$ real positive or a
 complex conjugate pair with positive real part, define the following
 density on $[0,\infty)$:  (See
\cite[(7.i)]{Askey:1989} or \cite[Section 1.3]{Koekoek-Swarttouw})
\begin{equation}
g(x;a,b,c)=\frac{|\Gamma(a+ i \sqrt{x})\Gamma(b+i\sqrt{x})\Gamma(c+i\sqrt{x})|^2}
{4\pi \Gamma(a+b)\Gamma(a+c)\Gamma(b+c)\sqrt{x}|\Gamma(2i\sqrt{x})|^2}\,.
\end{equation}
As previously, it is straightforward to use properties of the gamma
 function to get formulas for the mean $\mu$ and the variance $\sigma^2$,
\begin{equation}
\mu=ab+ac+bc,\; \sigma^2=(a+b) (a+c) (b+c).
\end{equation}

The  relevant version of \eqref{f/q} is
\begin{equation}\label{f/q3}
\frac{g(x;a+m,b,c) g\left(y;a,m+i \sqrt{x},m-i \sqrt{x}\right)}{g(y;a,b+m,c+m) }=f\left(x,m+i
   \sqrt{y},m-i \sqrt{y},b,c\right),
\end{equation}
where $x,y,m>0$.

Let $A\in\RR$ and let $B,C$ be  either real or a complex conjugate pair, and without loss of generality we assume that in the real case $B\geq C$.   Suppose in addition that  $A+\Re(C)>0$ so that  $T=(-\Re(C),A)$ is non-empty. Then from \eqref{f/q3} we get again a Markov process $(Y_t)_{t\in T}$ with
univariate distributions on the state space $(0,\infty)$ defined by the densities
$g(x;A-t, B+t,C+t)$, with transition probabilities defined for $s<t$ in $T$ and $x,y>0$ by the densities
$g(y; A-t, t-s -i\sqrt{x}, t-s+i\sqrt{x})$, and whose two-sided conditional laws are again
given by Wilson's density \eqref{f-cond}. In particular, after we make substitution \eqref{tilde y} formulas \eqref{m-M-cc} and \eqref{m-V-cc} for the two-sided conditional mean and variance hold.

As previously,  parameters $A,B,C$ affect only  the mean and the covariance of $(Y_t)$:
\begin{equation}
\E(Y_t)=-t^2+2 A t+AB+AC+B C,\; \Var(Y_t)=(A + B) (A + C) (B + C + 2 t)\,.
\end{equation}
Passing to the centered process \eqref{tilde y}, the one-sided conditional moments are:
$$
\E(\widetilde Y_t|\widetilde Y_s)=  A (t-s)+\widetilde Y_s,\; \V(\widetilde Y_t|\widetilde Y_s)= (t-s) \left(A^2- s A+\widetilde Y_s\right)\,.$$
In particular, the above formula for $\E(\widetilde Y_t|\widetilde Y_s)$ gives
$${\rm Cov}(\widetilde Y_s,\widetilde Y_t)=(A + B) (A + C) (B + C +  \min\{t,s\})\,.
$$
Then the transformation from  Proposition \ref{P-Mobius} takes a particularly simple form.
Markov process $$X_t=\frac{\widetilde Y_{t-B-C}-\E( \widetilde Y_{t-B-C})}{\sqrt{(A + B) (A + C)}}$$ is
a quadratic harness 
with parameters
\begin{eqnarray}
\eta&=&\frac{1}{\sqrt{(A+B) (A+C)}} \,,\\
\theta&=&\frac{2 A+B+C}{\sqrt{(A+B) (A+C)}}\,,\\
\sigma&=&0 \,,\\
\tau&=&1\,.
\end{eqnarray}
This gives us a family of quadratic harnesses arbitrary positive values for parameters $\eta,\theta$, with $\tau=1$, $\sigma=0$.
Other values of parameters are now produced by routine transformations that were mentioned in the introduction.
To swap the roles of $\sigma,\tau$ one uses time inversion $(tX_{1/t})$. Taking $(-X_t)$ we get arbitrary negative values of $\eta,\theta$, covering all possible  non-zero values of the same sign ($\eta\theta>0$). Finally, transformation  $(X_{\alpha t}/\sqrt{\alpha})$ produces arbitrary positive values for parameter $\tau$.

\begin{remark} The quadratic harness is defined on
$$
T'=(\Re(C-B),\infty).
$$
In particular, $T'=(0,\infty)$ if $B=\bar C$.  It would be interesting to see if the construction could be modified to yield $T'=(0,\infty)$ also for real $B\ne C$.

\end{remark}

\begin{remark}
Formula \eqref{f/q3} indicates that bridges of the three-parameter quadratic harnesses with $\sigma=0$ are the (transformations of) four-parameter quadratic harnesses from Corollary \ref{C-beta4}. It would be interesting to see if this holds true also in the cases without densities.
\end{remark}

\section{Two-parameter beta integral}\label{Sec:2par}
According to \cite[(5.i)]{Askey:1989}, see also \cite[Section 1.4]{Koekoek-Swarttouw}, the following is a probability density on $\RR$ when $c=\bar a, d=\bar b$ have positive real part.
\begin{equation}\label{varphi-denisty}
\varphi(x;a,b,c,d)=\frac{\Gamma (a+b+c+d) \Gamma
   \left(a+i {x}\right) \Gamma \left(b+i {x}\right) \Gamma \left(c-i {x}\right) \Gamma \left(d-i {x}\right)}{2\pi\Gamma (a+c) \Gamma (b+c) \Gamma
   (a+d) \Gamma (b+d)}.
\end{equation}

 The analog of Proposition \ref{P-abcd} is
 \begin{proposition}\label{P-cont_Hahn}
 If a random variable $X\in\RR$ has density $\varphi(x;a,b,c,d)$, then
  \begin{equation}\label{2-par-mean}
  \E(X)=-\frac{\Re (a) \Im (b)+\Re(b)\Im(a)}{\Re(a+b)}\,,
\end{equation}
\begin{equation}\label{2-par-var}
\Var(X)=\frac{\Re(a) \Re(b)\left((\Re(a+b))^2+(\Im(a-b))^2\right)}{(\Re(a+b))^2 (2 \Re(a+b)+1)}\,.
\end{equation}

\end{proposition}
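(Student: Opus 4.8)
The plan is to mimic the proof of Proposition \ref{P-abcd}, exploiting the fact that multiplication by $(a+ix)$ shifts the first parameter of $\varphi$ by one through the functional equation $\Gamma(a+1+ix)=(a+ix)\Gamma(a+ix)$. Write the normalizing constant as
$$
L(a,b,c,d)=\frac{\Gamma(a+b+c+d)}{2\pi\,\Gamma(a+c)\Gamma(b+c)\Gamma(a+d)\Gamma(b+d)}\,,
$$
so that $\varphi(x;a,b,c,d)=L(a,b,c,d)\,\Gamma(a+ix)\Gamma(b+ix)\Gamma(c-ix)\Gamma(d-ix)$ and the identity $\int_{\RR}\varphi\,dx=1$ is the two-parameter beta integral \cite[(5.i)]{Askey:1989}. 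The crucial point is that this integral identity is valid for \emph{all} parameters with positive real parts, not merely for the conjugate pairs $c=\bar a$, $d=\bar b$, so the algebraic manipulations below remain meaningful even after a parameter is shifted and the pairing is temporarily broken. Stirling's asymptotics $|\Gamma(\sigma+i\tau)|\sim\sqrt{2\pi}\,|\tau|^{\sigma-1/2}e^{-\pi|\tau|/2}$ guarantee exponential decay of $\varphi$, so all moments used below exist.

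First I would compute $\E(a+iX)$. Since $(a+ix)\Gamma(a+ix)=\Gamma(a+1+ix)$,
$$
(a+ix)\,\varphi(x;a,b,c,d)=\frac{L(a,b,c,d)}{L(a+1,b,c,d)}\,\varphi(x;a+1,b,c,d)\,,
$$
and integrating over $\RR$ (the right side still integrates to one, since $\Re(a+1)>0$) gives $\E(a+iX)=L(a,b,c,d)/L(a+1,b,c,d)=(a+c)(a+d)/(a+b+c+d)$, after cancelling Gamma factors with $s\Gamma(s)=\Gamma(s+1)$. Shifting $a$ and $b$ simultaneously in the same way yields
$$
\E\bigl((a+iX)(b+iX)\bigr)=\frac{L(a,b,c,d)}{L(a+1,b+1,c,d)}=\frac{(a+c)(b+c)(a+d)(b+d)}{(a+b+c+d)(a+b+c+d+1)}\,.
$$

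Next I would extract the variance by a clean cancellation. Expanding $(a+iX)(b+iX)=ab+i(a+b)X-X^2$, both $\E\bigl((a+iX)(b+iX)\bigr)$ and the product $\E(a+iX)\,\E(b+iX)$ carry the identical terms $ab+i(a+b)\E(X)$, so their difference is exactly $-\Var(X)$; hence, using $\E(b+iX)=(b+c)(b+d)/(a+b+c+d)$ by the $a\leftrightarrow b$ symmetry of $\varphi$,
$$
\Var(X)=\E(a+iX)\,\E(b+iX)-\E\bigl((a+iX)(b+iX)\bigr)=\frac{(a+c)(a+d)(b+c)(b+d)}{(a+b+c+d)^2\,(a+b+c+d+1)}\,.
$$
For the mean, solving $a+i\E(X)=(a+c)(a+d)/(a+b+c+d)$ for $\E(X)$ and taking imaginary parts gives the corresponding formula.

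Finally I would substitute the conjugate pairing $c=\bar a$, $d=\bar b$. Then $a+c=2\Re(a)$, $b+d=2\Re(b)$, $a+b+c+d=2\Re(a+b)$, and $(a+d)(b+c)=(a+\bar b)(\bar a+b)=|a+\bar b|^2=(\Re(a+b))^2+(\Im(a-b))^2$; plugging these into the two displayed expressions and simplifying yields \eqref{2-par-mean} and \eqref{2-par-var}. I expect the only genuine subtlety to be the justification in the first paragraph that the shifted functions $\varphi(\cdot;a+1,b,c,d)$ still integrate to one, i.e.\ that the beta integral persists under parameter shifts that destroy the conjugate pairing; everything after that is the bookkeeping of Gamma ratios and of real and imaginary parts.
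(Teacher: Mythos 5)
Your proof is correct, and before the substitution $c=\bar a$, $d=\bar b$ it lands on exactly the paper's intermediate formulas $\E(X)=i(ab-cd)/(a+b+c+d)$ and $\Var(X)=\frac{(a+c)(b+c)(a+d)(b+d)}{(a+b+c+d)^2(a+b+c+d+1)}$; but the route differs from the paper's in one substantive way. The paper never breaks the conjugate pairing: for the mean it uses the linear combination $(a+ix)(c-ix)-(b+ix)(d-ix)+bd-ac=i(b+c-a-d)x$ to isolate $x$, and it only ever forms expectations such as $\E\bigl((a+iX)(c-iX)\bigr)=K(a,b,c,d)/K(a+1,b,c+1,d)$, i.e.\ it shifts $a$ and $c$ \emph{together}, so every intermediate integrand is still a genuine probability density and only the probabilistic normalization $\int\varphi\,dx=1$ for conjugate pairs is needed. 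You instead multiply by the single complex factor $(a+ix)$, which forces you to invoke the integral identity for parameters that are no longer conjugate pairs. That extended identity does hold (it is Barnes' first lemma; \cite[(5.i)]{Askey:1989} is stated for general parameters with positive real parts, and it also follows by analytic continuation in $a$), so the ``subtlety'' you flag is a citation rather than a gap --- but the paper's pairing trick sidesteps the issue entirely and is worth knowing. In exchange, your variance computation telescopes more cleanly: $\Var(X)=\E(a+iX)\,\E(b+iX)-\E\bigl((a+iX)(b+iX)\bigr)$ cancels the $ab+i(a+b)\E(X)$ terms in one stroke, whereas the paper assembles $\E(X^2)-(\E(X))^2$ by hand, and your mean is obtained by solving $a+i\E(X)=(a+c)(a+d)/(a+b+c+d)$ rather than by the paper's linear combination. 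The final reduction via $(a+\bar b)(\bar a+b)=|a+\bar b|^2=(\Re(a+b))^2+(\Im(a-b))^2$ is exactly the right bookkeeping and yields \eqref{2-par-mean} and \eqref{2-par-var}.
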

\begin{proof}
Denote by $K(a,b,c,d)=\frac{\Gamma (a+b+c+d) }{2\pi\Gamma (a+c) \Gamma (b+c) \Gamma
   (a+d) \Gamma (b+d)}$ the normalizing constant in \eqref{varphi-denisty}. Then
\begin{multline}
\int_{-\infty}^\infty x \varphi(x;a,b,c,d)\,dx\\
=\frac{1}{i(c+b-a-d)}\left(\int_{-\infty}^\infty \left((a+ix)(c-ix)-(b+ix)(d-i x) +bd-ac\right)\varphi(x;a,b,c,d)\,dx\right)\\
=\frac{K(a,b,c,d)}{i(c+b-a-d)}\left(\frac{1}{K(a+1,b,c+1,d)} -\frac{1}{K(a,b+1,c,d+1)}+bd-ac\right)=
\frac{i (a b-c d)}{a+b+c+d}.\end{multline}
Substituting $a=\Re(a)+i\Im(a)$, $b=\Re(b)+i\Im(b)$, $c=\Re(a)-i\Im(a)$, $d=\Re(b)-i\Im(b)$
we get \eqref{2-par-mean}.

The variance comes from a similar calculation:
\begin{multline*}
\E(X^2)-(\E(X))^2=K(a+1,b,c+1,d)-(c-a)\E(X)-ac-(\E(X))^2\\=\frac{(a+c) (b+c) (a+d) (b+d)}{(a+b+c+d)^2 (a+b+c+d+1)}.
\end{multline*}

\end{proof}

 The analog of Proposition \ref{P-CK} is based on the identity
 \begin{equation}\label{f/q2}
\frac{\varphi(y; a, m - i{x}, \bar{a}, m + i {x})
\varphi(x; a + m, b, \bar{a} + m, \bar{b})}{
 \varphi(y; a, b + m, \bar{a}, \bar{b} + m)}=
\varphi(x; b, m - i {y}, \bar{b}, m + i {y}).
\end{equation}
Thus, given complex parameters $A,B$, such that $\Re(A+B)>0$, let $T= (-\Re(B),\Re(A))$. For $s<t$ in $T$, the univariate densities on $\RR$
\begin{equation}\label{two-par-univ}
f_t(x)=\varphi(x, A-t, B+t,\bar{A}-t,\bar{B}+t)\,,
\end{equation}
and the transition probabilities
\begin{equation}\label{two-param-transitions}
f_{s,t}(y|x)=\varphi(x, A-t, t-s-i{x}, \bar{A}-t, t-s+i{x})\,,
\end{equation}
satisfy the Chapman-Kolmogorov equations \eqref{Chapman1} and \eqref{Chapman2}.
 Let $(Y_t)_{t\in T}$ denote the corresponding Markov process.
 Then from \eqref{2-par-mean} and \eqref{2-par-var} we get
 \begin{equation}\label{2-param-Y}
 \E(Y_t)=\frac{\Im(B-A)}{\Re(A+B)}t-\frac{\Re(A) \Im(B)+\Im(A)\Re(B)}{\Re(A+B)},\;
  \V(Y_t)=M^2
 ({\Re(A)}-t)
   ({\Re(B)}+t),
\end{equation}
where
\begin{equation}\label{2par-M}
M^2=\frac{({\Im(A-B)})^2+({\Re(A+B)})^2 }{({\Re(A+B)})^2 (2 {\Re(A+B)}+1)}\,.
\end{equation}
Since  \eqref{2-par-mean} also gives
$$
\E(Y_t|Y_s)=\frac{{\Re(A)}-t }{{\Re(A)}-s}Y_s-\frac{{\Im(A)} (t-s)}{{\Re(A)}-s}
$$
for $s<t$, from \eqref{2-param-Y} we further calculate
\begin{equation}\label{2-param-Y-Cov}
 {\rm Cov}(Y_s,Y_t)=M^2({\Re(A)}-t)
   ({\Re(B)}+s).
\end{equation}

Next we compute conditional moments. For $s<t<u$ in, the two-sided conditional density  of  $\mathcal{L}(Y_t|Y_s=x,Y_u=z)$ is given by
 $$
  g(y|x,z)=\varphi(y; t-s-i x, u-t-iz, t-s+i x,u-t+iz)\,.
  $$

  So from \eqref{2-par-mean},
   $$\E(Y_t|Y_s,Y_u)=\frac{(u-t) Y_s+(t-s) Y_u}{u-s}\,.$$
  and from \eqref{2-par-var} we get
\begin{equation}\label{2par-C-Var}
\Var(Y_t|Y_s,Y_u)=\frac{(t-s) (u-t)}{(2 u-2 s+1)} \left(1+\frac{\left(Y_u-Y_s\right)^2}{ (u-s)^2}\right)\,.
\end{equation}

From Proposition \ref{P-Mobius} applied with
$ \chi _0= 1$,
$\alpha = -\frac{{\Im(B)} {\Re(A)}+{\Im(A)}
   {\Re(B)}}{{\Re(A+B)}}$, $\beta =
   \frac{{\Im(B-A)}}{{\Re(A+B)}}$, $\eta _0= 0$, $\theta _0= 0$,
   $\epsilon = -1$, $\psi = {\Re(B)}$, $\delta = {\Re(A)}$,
   $M=
   \frac{\sqrt{({\Im(A-B)})^2+({\Re(A+B)})^2}}{\Re(A+B) \sqrt{2
   {\Re(A+B)}+1}}$,
we get
$$
\sigma=\tau=\frac{1}{2 {\Re(A+B)}+1}\,,
$$
$$
\eta=-\theta=\frac{2 ({\Im(A-B)})}{\sqrt{(2 {\Re(A+B)}+1)
   \left((\Im(A-B))^2+(\Re(A+B))^2\right)}}\,.
$$
 From the first equation, we see that $\Re(A+B)=\frac{1-\sigma}{2\sigma}$. The second equation determines $\Im(A-B)$ as  a real number iff $\theta^2<4\tau$. This proves the following.
 \begin{proposition}
   For every $\sigma\in(0,1)$ and $\eta\in(-2\sqrt{\sigma},2\sqrt{\sigma})$, there is a quadratic harness on $(0,\infty)$ with parameters $\eta, \theta=-\eta, \sigma, \tau=\sigma,\gamma=1-2\sigma$.
 \end{proposition}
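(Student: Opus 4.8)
The plan is to read the statement as an inversion problem. The paragraph preceding it already produces, for \emph{arbitrary} complex $A,B$ with $\Re(A+B)>0$, a quadratic harness with $\sigma=\tau$, $\theta=-\eta$, $\gamma=1-2\sigma$, and with $\sigma,\eta$ given explicitly in terms of $A,B$. So I would simply solve those two relations for admissible $A,B$ and then check that the associated time domain is $(0,\infty)$.

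First I would recover the two invariants of $A,B$ that actually enter the formulas. From $\sigma=1/(2\Re(A+B)+1)$ I get $\Re(A+B)=(1-\sigma)/(2\sigma)=:p$, which is strictly positive exactly because $\sigma\in(0,1)$; this positivity is precisely the hypothesis $\Re(A+B)>0$ under which $T=(-\Re(B),\Re(A))$ is a nonempty interval and the densities \eqref{two-par-univ}--\eqref{two-param-transitions} are genuine probability densities on $\RR$. Next, writing $q:=\Im(A-B)$ and inserting $p$ into the formula for $\eta$ displayed just before the statement (which, using $2\Re(A+B)+1=1/\sigma$, becomes $\eta=2q\sqrt{\sigma}/\sqrt{q^2+p^2}$), I square and clear denominators to obtain $q^2(4\sigma-\eta^2)=\eta^2p^2$. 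This equation has a real solution if and only if $4\sigma-\eta^2>0$, i.e. $\eta\in(-2\sqrt{\sigma},2\sqrt{\sigma})$, which is exactly our hypothesis; taking $q=\eta p/\sqrt{4\sigma-\eta^2}$ also gives $q$ the same sign as $\eta$, so the signed relation, and not merely its square, is satisfied.

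Having pinned down $\Re(A+B)=p$ and $\Im(A-B)=q$, I would exhibit one concrete pair, for definiteness $A=p/2+iq/2$ and $B=p/2-iq/2$, so that $\Re(A)=\Re(B)=p/2>0$. Only $p$ and $q$ appear in \eqref{2par-M} and in the parameter formulas coming from Proposition \ref{P-Mobius}, so the remaining freedom in the choice of $A,B$ (the values of $\Re(A)-\Re(B)$ and of $\Im(A+B)$) is the same illusory non-uniqueness, corresponding to a translation of the time axis, that already appears in the hyperbolic case of Section \ref{Sec:4par}; it does not affect the final harness. With this $A,B$ the preceding construction applies verbatim: \eqref{f/q2} yields the Chapman--Kolmogorov equations \eqref{Chapman1} and \eqref{Chapman2}, Proposition \ref{P-cont_Hahn} supplies the conditional moments, and Proposition \ref{P-Mobius} converts $(Y_t)$ into a quadratic harness with the advertised $\sigma=\tau$, $\theta=-\eta$, $\gamma=1-2\sigma$.

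Finally I would check the time domain. With $\eps=-1$, $\psi=\Re(B)$, $\delta=\Re(A)$ the Möbius map of Proposition \ref{P-Mobius} is $t\mapsto(t+\Re(B))/(\Re(A)-t)$; as $t\downarrow-\Re(B)$ the image tends to $0$ and as $t\uparrow\Re(A)$ it tends to $+\infty$, while numerator and denominator remain positive in between because $p>0$, so $T'=(0,\infty)$. I do not foresee a real obstacle: all of the probabilistic content — the Markov property, the linear regressions, the quadratic conditional variance, and the reduction to standard form — is already in place for every admissible $A,B$, and what remains is the elementary inversion above. The one point genuinely worth isolating is that solvability for a real $q$ is equivalent to the prescribed range $\eta^2<4\sigma$, which is what makes the stated interval for $\eta$ both sufficient and, implicitly, sharp.
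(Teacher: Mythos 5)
Your proposal is correct and follows essentially the same route as the paper: the paper's (very terse) argument is precisely to solve $\sigma=1/(2\Re(A+B)+1)$ for $\Re(A+B)=(1-\sigma)/(2\sigma)$ and to observe that the displayed formula for $\eta$ determines $\Im(A-B)$ as a real number iff $\eta^2<4\sigma$. Your write-up merely adds the explicit choice of $A,B$ and the verification that $T'=(0,\infty)$, which the paper leaves implicit.
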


\subsection{Bridges of  the hyperbolic secant  process}
Bridges of all Meixner processes are described in
 \cite[Proposition 4.2 and Remark 4.1]{Bryc-Wesolowski-09}. According to these results, bridges of Meixner processes are quadratic harnesses with $\eta\sqrt{\tau}+\theta\sqrt{\sigma}=0$. When $\sigma\tau>0$, then depending on the sign of $\theta^2-4\tau$, such processes arise as bridges of negative binomial, gamma, or the hyperbolic secant  process.    In   \cite{Bryc-Wesolowski-09} the bridges of hyperbolic secant  process were not described explicitly, so we identify their transition probabilities here.

The following integral is  due to Meixner \cite[page 13]{Meixner:1934}, and is listed as \cite[(4.i)]{Askey:1989}:
\begin{equation}
\int_{-\infty}^\infty |\Gamma(a+ ix)|^2 e^{\beta x} dx=\frac{2\pi \Gamma(2 a)}{(2\cos\frac \beta 2)^{2 a}}.
\end{equation}
The integral is well defined for real $a>0$ and $-\pi<\beta<\pi$.
Denote by $f(x;a,\beta)$ the corresponding density, i.e.
\begin{equation}\label{HM}
f(x;a,\beta) = \frac{(2\cos\frac \beta 2)^{2a}}{2\pi \Gamma(2 a)}  |\Gamma(a+ ix)|^2 e^{\beta x}\,,
\end{equation}
and by $X$ the corresponding random variable.


Differentiating \eqref{HM} with respect to $\beta$ and integrating the answer we get
$\E(X)=a \tan \left(\frac{\beta}{2}\right)$ and $\Var(X)=\frac{1}{2} a \sec ^2\left(\frac{\beta}{2}\right)$.
It is known that the corresponding Markov process  has independent increments:
the univariate law of $Y_t$  has density
$f_t(x)=f(x;A-t, \beta)$ and the transition densities are
$f_{s,t}(y|x)=f(y-x;t-s,\beta)$.
One can verify also Chapman-Kolmogorov equations directly from the analog of Proposition \ref{P-CK} which is based on the identity
\begin{equation}\label{BBBridge}
\frac{f(y-x; m,\beta)f(x;a,\beta)}{f(y; a+m,\beta)}=\varphi(x; a, m - i y, a, m + i y).
\end{equation}
The right hand side of  \eqref{BBBridge} integrates to $1$ because of
 \eqref{varphi-denisty}.
(This gives an "elementary" proof of the well known fact established
 by Laha and Lukacs \cite[Lemma 2]{Laha-Lukacs60} that  the hyperbolic secant  laws form a
 convolution semi-group.)

The following proposition describes in more detail bridges mentioned in \cite[Remark 4.1]{Bryc-Wesolowski-09}.
\begin{proposition}\label{P-M_bridges}
All bridges of a hyperbolic secant  process are described by formulas
\eqref{two-par-univ} and \eqref{two-param-transitions}. Conversely,
all  quadratic harnesses with $0<\sigma\tau<1$, $\gamma=1-2\sqrt{\sigma\tau}$ and
$\eta,\theta\in\RR$ such that $\eta\sqrt{\tau}+\theta\sqrt{\sigma}=0$ and $\theta^2<4\tau$ can be realized as such bridges.
\end{proposition}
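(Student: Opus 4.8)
The plan is to prove both halves by directly converting the bridge densities of the hyperbolic secant process into the two‑parameter densities \eqref{two-par-univ} and \eqref{two-param-transitions}, with \eqref{BBBridge} as the main tool, and then to obtain the converse by matching parameters against the construction already carried out in this section. Since the hyperbolic secant process is a L\'evy process, its bridge on a finite interval, pinned at a left value $x_0$ and a right value $z$, is a Markov process whose marginal and transition densities are the usual Doob ratios of the increment densities $f(\cdot;\cdot,\beta)$; for instance the forward transition from $(s,\xi)$ to $(t,y)$ is $f(y-\xi;t-s,\beta)f(z-y;L-t,\beta)/f(z-\xi;L-s,\beta)$. A preliminary observation is that in every such ratio the factors $e^{\beta\cdot}$ and the powers of $2\cos(\beta/2)$ cancel, so the bridge does not depend on $\beta$. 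Applying \eqref{BBBridge} rewrites each ratio as a $\varphi$‑density, and I would also record the elementary shift identity
$$\varphi(y-x;a,b,\bar a,\bar b)=\varphi(y;a-ix,b-ix,\bar a+ix,\bar b+ix),$$
which follows at once from the $\Gamma$‑product form \eqref{varphi-denisty}: the four arguments are shifted by $\mp ix$ while their sum, and hence the normalizing constant, is unchanged. Combining \eqref{BBBridge} with this identity turns the bridge marginals into \eqref{two-par-univ} and the (translation‑covariant) L\'evy‑bridge kernels into \eqref{two-param-transitions}, under an identification in which the pinned endpoint values fix $\Im A$ and $\Im B$, while $\Re A,\Re B$ are fixed by the bridge times up to the harmless time translation noted in the introduction. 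This proves that every bridge is one of the processes \eqref{two-par-univ}, \eqref{two-param-transitions}.

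For the converse I would invoke the parameter computation already performed in this section, where the two‑parameter process was shown to be a quadratic harness with $\sigma=\tau=1/(2\Re(A+B)+1)$ and $\eta=-\theta=2\Im(A-B)/\sqrt{(2\Re(A+B)+1)\bigl((\Im(A-B))^2+(\Re(A+B))^2\bigr)}$. These depend on $A,B$ only through $\Re(A+B)$ and $\Im(A-B)$. Given a target harness with $0<\sigma\tau<1$, $\gamma=1-2\sqrt{\sigma\tau}$, $\eta\sqrt{\tau}+\theta\sqrt{\sigma}=0$ and $\theta^2<4\tau$ (equivalently $\sigma=\tau\in(0,1)$, $\eta=-\theta$, $\eta^2<4\sigma$), I would solve $\Re(A+B)=(1-\sigma)/(2\sigma)>0$ and then solve the $\eta$‑equation for $\Im(A-B)$; the condition $\theta^2<4\tau$ is exactly what guarantees a real solution, as already noted just before the proposition immediately preceding this one. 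Any $A,B$ with positive real parts realizing these two quantities then defines, by the forward direction, a hyperbolic secant bridge whose associated quadratic harness has the prescribed parameters. In fact this is precisely the range covered by that preceding proposition, so the converse reduces to combining it with the bridge identification.

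The main obstacle is the bookkeeping in the forward direction: the L\'evy‑bridge transition is a function of the increment $y-\xi$ and is translation covariant, whereas \eqref{two-param-transitions} is written as a function of $y$ with the conditioning value buried inside the $\Gamma$‑parameters, and moreover the two‑parameter time runs \emph{opposite} to the bridge time. The shift identity for $\varphi$ is what reconciles these two forms, and the delicate points are to track the time reversal correctly, to verify that the pinned values map to $\Im A$ and $\Im B$ with the right signs, to confirm that the $\beta$‑dependence genuinely disappears, and to check that the realizability threshold for $\Im(A-B)$ coincides exactly with $\theta^2<4\tau$. A clean alternative that sidesteps the orientation issue is to match instead the one‑dimensional marginals together with the two‑sided conditional laws: by the Markov property the bridge's conditional $\mathcal{L}(Y_t\mid Y_s,Y_u)$ for $s<t<u$ coincides with that of the underlying L\'evy process, which \eqref{BBBridge} and the shift identity identify with the corresponding two‑parameter two‑sided conditional. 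Once these matchings are in place, both halves of the proposition follow.
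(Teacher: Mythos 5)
Your proposal is correct and essentially reproduces the paper's argument: the paper's proof is exactly your ``clean alternative'' --- it uses \eqref{BBBridge} to write the two-sided conditional law of the L\'evy process as a $\varphi$-density, applies the same shift identity $\varphi(y-x;\dots)=\varphi(y;\dots\mp ix\dots)$ you record, and identifies the result with \eqref{two-par-univ} and \eqref{two-param-transitions} under $A=U-iY_U$, $B=-S-iY_S$; the converse then follows, as you say, by solving $\Re(A+B)=(1-\sigma)/(2\sigma)$ and the $\eta$-equation for $\Im(A-B)$, with $\theta^2<4\tau$ guaranteeing a real solution. Your primary route through the Doob-ratio forward kernels is equivalent but carries the extra bookkeeping you yourself flag, which the two-sided-conditional route avoids.
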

\begin{proof}
From \eqref{BBBridge} we see that for a hyperbolic secant  process
 $(Y_t)$,  the two-sided conditional law
 of  $\mathcal{L}(Y_t|Y_s=x,Y_u=z)$ is given by
 \begin{equation}\label{Meixner bridge}
   g(y|x,z)=\varphi(y-x; t-s, u-t-i(z-x), t-s,u-t+i(z-x)).
\end{equation}
Inspecting formula \eqref{varphi-denisty}, we see that $$\varphi(y-x; t-s, u-t-i(z-x), t-s,u-t+i(z-x))=\varphi(y; t-s-ix,
 u-t-iz, t-s+ix,u-t+i z)$$
$$
=\varphi(y; u-t-iz, t-s-ix, u-t+i z,t-s+ix)\,.
$$
So identifying this with the univariate law of the bridge at time
 $S<t<U$, conditioned at $S<U$, we can read out that the bridge  corresponds to the Markov process with transition probabilities \eqref{two-param-transitions}, where
$A=U- i Y_U$, $B=-S-iY_S$.

\end{proof}
\section{Standard beta integral}\label{Sect:SBI}

In this section we use the well known beta  density
\begin{equation}\label{beta1}
f(x;a,b)=\frac{\Gamma(a+b)}{\Gamma(a)\Gamma(b)}x^{a-1}(1-x)^{b-1}
\end{equation}
 to re-derive the quadratic harness properties of the one-parameter family of Dirichlet processes from \cite[Eample 4.1]{Bryc-Wesolowski-09}.  (Here,  the density is on $0<x<1$, and the parameters satisfy  $a,b>0$.)
It is well know that the corresponding random variable $X$ has moments
\begin{equation}\label{beta}
\E(X)=\frac{a}{a+b},\quad
\mbox{and}\quad
\Var(X)=\frac{ab}{(a+b)^2(a+b+1)}\;.
\end{equation}
The  analog of \eqref{f/q} is the algebraic identity
\begin{equation}\label{f/q5}
\frac{\frac{1}{1-x}f\left(\frac{y-x}{1-x};m,b\right)f(x;a,b+m)}{f(y; a+m,b) }=\frac1yf(x/y;a,m).
\end{equation}
In particular, we have a "convolution formula",
\begin{equation}\label{CK5}
\int_0^y \frac{1}{1-x}f\left(\frac{y-x}{1-x};m,b\right)f(x;a,b+m) dx=f(y; a+m,b).
\end{equation}
Given $A>0$, we now use \eqref{CK5} to  define the Markov process $(Y_t)_{0<t<A}$ by specifying its univariate laws as
$$
f_t(x)=f(x;t,A-t), \; x\in[0,1],
$$
and for $s<t$, $y\geq x$ its transition probabilities as
$$
f_{s,t}(y|x)=\frac{1}{1-x}f\left( \frac{y-x}{1-x};t-s, A-t\right).
$$
A calculation based on \eqref{CK5} shows that these expressions indeed satisfy
the Chapman-Kolmogorov equations, so Markov process $(Y_t)_{t\in(0,A)}$ is well defined.
(The same conclusion can be reached via probabilistic arguments, as these processes arise as bridges of the gamma process.)

From \eqref{beta}, $\E(Y_t)=t/A$ and $\Var(Y_t)=\frac{t(A-t)}{A^2(A+1)}$, and with some more work one can read out that $ {\rm Cov}(Y_s,Y_t)=\frac{s(A-t)}{A^2(A+1)}$ for $s\leq t$.

Since \eqref{f/q5} shows that two-sided conditional laws are also beta, from \eqref{beta} we can read out the conditional moments
$$
\E(Y_t-Y_s|Y_s,Y_u)= \frac{t-s}{u-s}(Y_u-Y_s),
$$
$$
\Var(Y_t|Y_s,Y_u)=\frac{(t-s)(u-t)}{(u-s)^2((u-s)+1)}(Y_u-Y_s)^2.
$$
Applying Proposition \ref{P-Mobius} with $M= \frac{1}{A \sqrt{A+1}}$, $\beta =   {1}/{A}$, $
 = A$, $\epsilon \to -1$ (the remaining parameters are $0$), we see that $(Y_t)$ can be transformed into a quadratic harness on $T'=(0,\infty)$ with parameters
$$
\eta=-\theta=-2/\sqrt{1+A},\; \sigma=\tau=1/(1+A), \; \gamma=1-2/(1+A)^2.
$$
(This is consistent with \cite[Eample 4.1]{Bryc-Wesolowski-09}.)

\subsection*{Acknowledgement}
We would like to thank Arthur Krener and Ofer Zeitouni for information on reciprocal processes, and to J. Weso\l owski for  several related discussions.
This research was partially supported by NSF
grant \#DMS-0904720, and by the Taft Research Center.

%
\bibliographystyle{alpha}
\bibliography{../Vita,W-09,reciprocal}

\end{document}